\begin{document}
\newtheorem{theorem}{Theorem}[section]
\newtheorem{lemma}[theorem]{Lemma}
\newtheorem{corollary}[theorem]{Corollary}
\newtheorem{prop}[theorem]{Proposition}
\newtheorem{definition}[theorem]{Definition}
\newtheorem{remark}[theorem]{Remark}

 \def\ad#1{\begin{aligned}#1\end{aligned}}  \def\b#1{{\bf #1}} \def\hb#1{\hat{\bf #1}}
\def\a#1{\begin{align*}#1\end{align*}} \def\an#1{\begin{align}#1\end{align}}
\def\e#1{\begin{equation}#1\end{equation}} \def\t#1{\hbox{\rm{#1}}}
\def\dt#1{\left|\begin{matrix}#1\end{matrix}\right|}
\def\p#1{\begin{pmatrix}#1\end{pmatrix}} \def\c{\operatorname{curl}}
 \def\vc{\nabla\times } \numberwithin{equation}{section}
 \def\la{\circle*{0.25}}
\def\boxit#1{\vbox{\hrule height1pt \hbox{\vrule width1pt\kern1pt
     #1\kern1pt\vrule width1pt}\hrule height1pt }}
 \def\lab#1{\boxit{\small #1}\label{#1}}
  \def\mref#1{\boxit{\small #1}\ref{#1}}
 \def\meqref#1{\boxit{\small #1}\eqref{#1}}
\long\def\comment#1{}

\def\lab#1{\label{#1}} \def\mref#1{\ref{#1}} \def\meqref#1{\eqref{#1}}

\def\bg#1{{\pmb #1}} 

\title  [$C^1$ rectangular elements]
   {Rectangular $C^1$-$P_k$ finite elements with $Q_k$-bubble enrichment}

\author { Shangyou Zhang }
\address{Department of Mathematical  Sciences, University of Delaware, Newark, DE 19716, USA.}
\email{ szhang@udel.edu }

\date{}

\begin{abstract}
We enrich the $P_k$ polynomial space by $5$ ($k=4$), or $7$ ($k=5$), or 8 (all $k\ge 6$) $Q_k$ bubble functions to obtain a family of $C^1$-$P_k$ ($k\ge 4$)  finite elements on rectangular meshes. We show the uni-solvency,   the $C^1$-continuity and the quasi-optimal convergence. Numerical tests on the new $C^1$-$P_k$, $k=4,5,6,7$ and $8$, elements are performed.  
\end{abstract}

\vskip .3cm

\keywords{  biharmonic equation; conforming element; Qk bubbles,
    finite element; quadrilateral mesh. }

\subjclass[2010]{ 65N15, 65N30 }

\maketitle

\section{Introduction} 
In this work,  we construct $C^1$-$P_k$ ($k\ge 4$) finite elements
   by $Q_k$-bubble-enrichment on rectangular meshes for the following biharmonic equation, i.e.,  
   the plate bending equation,
\an{\label{bi} \ad{ \Delta^2 u & = f \quad \t{in } \ \Omega, \\
        u=\partial_{\b n} u & =0  \quad \t{on } \ \partial\Omega, } }
where $\Omega$ is a polygonal domain which can be subdivided into rectangles,
    and $\b n$ is the unit outer normal vector at the boundary.

Some famous finite elements were constructed in the early days,  for
  solving the biharmonic equation \eqref{bi}.
The $C^1$-$P_3$ Hsieh-Clough-Tocher element (1961,1965) was constructed in \cite{Ciarlet,Clough}.
The element is a macro-element where each base triangle is split into three by 
   connecting the bary-center to the three vertices.
The was extended to the family of $C^1$-$P_k$ ($k\ge 3$) 
   finite elements in \cite{Douglas}.

The $C^1$-$P_5$  Argyris element (1968) was constructed in \cite{Argyris}.
The $C^1$-$P_5$  Argyris element was extended to the family of 
   $C^1$-$P_k$ ($k\ge 5$) finite elements in \cite{Zen70,Zlamal}. 
The $C^1$-$P_5$  Argyris element was modified and extended to the family of  
   $C^1$-$P_k$ ($k\ge 5$) full-space finite elements in \cite{Morgan-Scott}.
The $C^1$-$P_5$  Argyris element was also extended to 3D $C^1$-$P_k$ ($k\ge 9$)
   elements on tetrahedral meshes in \cite{Zenisek,Z3d,Z4d}.
 
The $C^1$-$P_4$ Bell element (1969) was constructed in \cite{Bell}.
The Bell element eliminates all degrees of freedom at edges by
  limiting the polynomial degree of the normal derivative.
The $C^1$-$P_4$ Bell element was extended to three families of 
    $C^1$-$P_{2m+1}$ ($m\ge 3$) finite elements in \cite{Xu-Zhang7,Xu-Zhang}. 
As the Bell finite elements do not have any degrees of freedom on edges,
   the polynomial degree above must be an odd one.

 The $C^1$-$P_3$ \ Fraeijs de Veubeke-Sander element (1964,1965)
   was constructed in \cite{Fraeijs,Fraeijs68,Sander},
  where each base quadrilateral is split into 4 sub-triangles by the
    two diagonal lines, on quadrilateral meshes.
The $C^1$-$P_3$ Fraeijs de Veubeke-Sander element is extended to two families
  of $C^1$-$P_k$ ($k\ge 3$) finite elements in \cite{Zhang-F}.

\begin{figure}[H]\centering
 \begin{picture}(300,260)(0,-10) 
 \def\c{\circle*{4}}\def\h{\vector(1,0){13}}\def\v{\vector(0,1){13}}
 \def\d{\multiput(0.5,-0.5)(-1,1){2}{\vector(1,1){11}}}  

 \put(0,135){\begin{picture}(100,108)(0,0)   
  \put(0,0){\line(1,0){100}}  \put(0,0){\line(0,1){100}}   
  \put(100,100){\line(-1,0){100}}  \put(100,100){\line(0,-1){100}}  
  \multiput(0,0)(50,0){3}{\multiput(0,0)(0,50){3}{\c}}  
  \multiput(2,2)(50,0){3}{\v}  \multiput(2,102)(50,0){3}{\v}  
  \multiput(2,2)(0,50){3}{\h}  \multiput(102,2)(0,50){3}{\h} 
    \multiput(2,2)(0,100){2}{\d} \multiput(102,2)(0,100){2}{\d} 
  
     % \put(-15,-12){$\b x_1$} \put(103,-12){$\b x_2$}
     % \put(-15,104){$\b x_4$} \put(88,104){$\b x_3$}
 \end{picture} }
  
 \put(0,-5){\begin{picture}(100,108)(0,0)   
  \put(0,0){\line(1,0){100}}  \put(0,0){\line(0,1){100}}   
  \put(100,100){\line(-1,0){100}}  \put(100,100){\line(0,-1){100}}  
  \multiput(0,0)(50,0){3}{\multiput(0,0)(0,50){3}{\c}}  
  \multiput(2,2)(100,0){2}{\v}  \multiput(2,102)(100,0){2}{\v}  
  \multiput(2,2)(0,100){2}{\h}  \multiput(102,2)(0,100){2}{\h} 
    \multiput(2,2)(0,100){2}{\d} \multiput(102,2)(0,100){2}{\d} 
  
     % \put(-15,-12){$\b x_1$} \put(103,-12){$\b x_2$}
     % \put(-15,104){$\b x_4$} \put(88,104){$\b x_3$}
 \end{picture} }
  
 \put(160,-5){\begin{picture}(100,108)(0,0)   
  \put(0,0){\line(1,0){100}}  \put(0,0){\line(0,1){100}}   
  \put(100,100){\line(-1,0){100}}  \put(100,100){\line(0,-1){100}}  

  \multiput(0,0)(100,0){2}{\multiput(0,0)(0,50){3}{\c}} 
     \multiput(50,0)(100,0){1}{\multiput(0,0)(0,100){2}{\c}}  

  \multiput(2,2)(100,0){2}{\v}  \multiput(2,102)(100,0){2}{\v}  
  \multiput(2,2)(0,100){2}{\h}  \multiput(102,2)(0,100){2}{\h} 
    \multiput(2,2)(0,100){2}{\d} \multiput(102,2)(0,100){2}{\d} 
  
     % \put(-15,-12){$\b x_1$} \put(103,-12){$\b x_2$}
     % \put(-15,104){$\b x_4$} \put(88,104){$\b x_3$}
 \end{picture} }
  
 \put(160,135){\begin{picture}(100,108)(0,0)   
  \put(0,0){\line(1,0){100}}  \put(0,0){\line(0,1){100}}   
  \put(100,100){\line(-1,0){100}}  \put(100,100){\line(0,-1){100}}  
  \multiput(0,0)(100,0){2}{\multiput(0,0)(0,50){3}{\c}} 
     \multiput(50,0)(100,0){1}{\multiput(0,0)(0,100){2}{\c}}  
  \multiput(2,2)(50,0){3}{\v}  \multiput(2,102)(50,0){3}{\v}  
  \multiput(2,2)(0,50){3}{\h}  \multiput(102,2)(0,50){3}{\h} 
    \multiput(2,2)(0,100){2}{\d} \multiput(102,2)(0,100){2}{\d} 
   
 \end{picture} }
  
 \end{picture}
 \caption{Top-left: The 25 degrees of freedom 
     for the $C^1$-$Q_4$ BFS element; \ 
     Top-right: The 24 degrees of freedom for the $C^1$-$Q_4$
       serendipity finite element; \
    Bottom-left: The 21 degrees of freedom for the $C^1$-$Q_4$
       Bell element; \ 
     Bottom-right: The 20 degrees of freedom for the new $C^1$-$P_4$
        finite element. } \label{T-4}
 \end{figure}
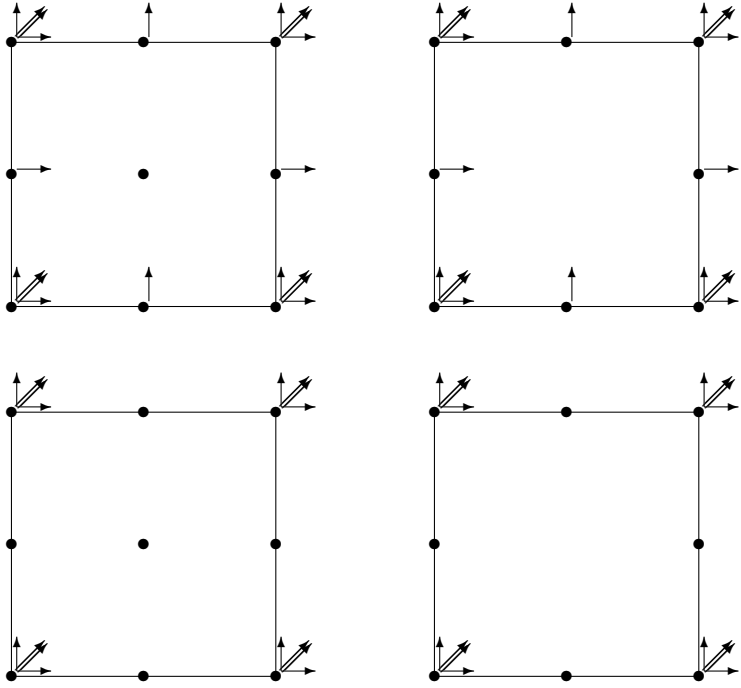

The $C^1$-$Q_3$ Bogner-Fox-Schmit element (1965) was constructed in \cite{Bogner}.
The $C^1$-$Q_3$ BFS element was extended to three families of $C^1$-$Q_k$ ($k\ge 3$)
  finite elements on rectangular meshes in \cite{Zhang-C1Q}.
The $C^1$-$Q_k$ Bell elements were constructed in \cite{Hu-Hongling}, where the polynomial
  degree of the normal derivative is reduced.  
The $C^1$-$Q_k$ serendipity elements were constructed in \cite{Zhang-seren},
  where all redundant internal degrees of freedom of the dofs of $C^1$-$Q_k$ 
    are eliminated and replaced by $P_{k-8}$ internal Lagrange nodes.
In this work, we use some such $C^1$-$Q_k$ bubbles to enrich the $P_k$ space in
  the $C^1$-$P_k$ finite element construction.

The $C^1$-$Q_4$ BFS element has 25 degrees of freedom (shown in Figure \ref{T-4}) on
  each square.
The serendipity element eliminates the 1 internal dof of the $Q_4$ BFS' 25 dofs
   and has 24 dofs each element.
The Bell element eliminate an edge-derivative dof of the $Q_4$ BFS' dofs and has 21 dofs
   per element.
The newly constructed $C^1$-$Q_4$ element eliminates both eliminated dofs (1 plus 4)
  above has 20 dofs each element.

In this work, we enrich the $P_k$ polynomial space by $5$ ($k=4$), or $7$ ($k=5$),
  or 8 (all $k\ge 6$) $Q_k$ bubble functions to obtain a family of $C^1$-$P_k$ ($k\ge 4$) 
  finite elements on rectangular meshes.
We show the uni-solvency,   the $C^1$-continuity and the quasi-optimal convergence.
Numerical tests on the new $C^1$-$P_k$, $k=4,5,6,7$ and $8$,
    elements are performed,  confirming the theory.
They are compared with the $C^1$-$Q_k$ BSF counterparts.

\section{The bubble-enriched $C^1$-$P_4$ finite element}

Let $\mathcal Q_h=\{ T \}$ be a uniform square mesh on the domain $\Omega$. 
On a square (or a rectangle) $T$,  the $C^1$-$Q_k$ Bell element, 
   a sub-element of the Bogner-Fox-Schmit (BFS) finite element, is defined by, cf. \cite{Hu-Hongling},
    for $k\ge 4$,  
\an{\label{W-T} W_k(T) =\{ v \in Q_k(T) : \partial_{\b n} v|_e \in Q_{k-1}(e), \ e \in \partial T\}, }
where $\partial_{\b n}$ denotes a normal derivative on the edge $e$,
   and $Q_k=\t{span} \{ x^{ k_1}y^{k_2} : 0\le k_1, k_2\le k \}$.  
For the finite element $V_T$,  
    the degrees of freedom of the Bell element are defined by, cf.
   Figure \ref{f-dof2}, 
\an{\label{dof2} F_m(v) = \begin{cases}
     v ,  & \t{at } \ \b x_1+\frac h{k-2}\langle i, j\rangle, \; i,j=0,\dots,k-2, \\
     \partial_x v, 
          & \t{at } \ \b x_1+ h \langle i,\frac j{k-3}\rangle, \; i=0,1, \ j=0,\dots,k-3, \\ 
     \partial_y v, 
          & \t{at } \ \b x_1+ h \langle \frac i{k-3},j\rangle , \; i=0,\dots, k-3,\ j=0,1, \\
     \partial_{xy} v, 
          & \t{at } \ \b x_1+h\langle i,j\rangle, \; i,j=0, 1, \end{cases} }
where $h$ is the $x$-size and the $y$-size of the square $T$.

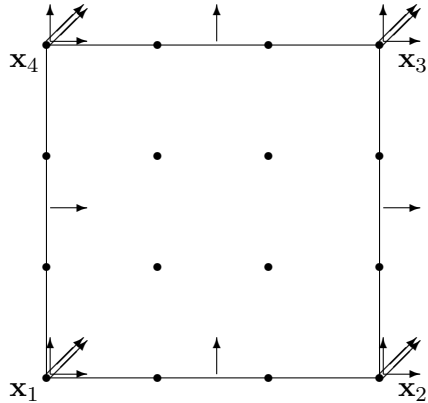
\begin{figure}[H] \centering \setlength\unitlength{1.4pt}
\begin{picture}(110,110)(0,0)
\put(10,5){\begin{picture}(110,110)(0,0) 
  \put(-10,-5){$\b x_1$}   \put(95,-5){$\b x_2$} 
  \put(-10,84){$\b x_4$}   \put(95,84){$\b x_3$} 
  \multiput(0,0)(90,0){2}{\line(0,1){90}}\multiput(0,0)(0,90){2}{\line(1,0){90}}
  \multiput(0,0)(30,0){4}{\multiput(0,0)( 0,30){4}{\circle*{2}}}
  \multiput(1,1)(90,0){2}{\multiput(0,0)( 0,45){3}{\vector(1,0){10}}}
  \multiput(1,1)(0,90){2}{\multiput(0,0)(45, 0){3}{\vector(0,1){10}}}
  \multiput(1,0)(90,0){2}{\multiput(0,0)( 0,90){2}{\vector(1,1){10}}}
  \multiput(0,1)(90,0){2}{\multiput(0,0)( 0,90){2}{\vector(1,1){10}}}\end{picture} } 
   \end{picture}
   \caption{\label{f-dof2} The degrees of freedom of the $C^1$-$Q_5$ Bell finite element,
       cf. \eqref{dof2}.
         }
   \end{figure}

The finite element nodal basis functions, dual to the degrees of freedom \eqref{dof2},
  are denoted by
\an{\label{b-basis} \begin{cases} 
     b_{1}^{i,j},  &i,j=0,\dots,k-2, \\
     b_{2}^{i,j}, 
          & i=0,1, \ j=0,\dots,k-3, \\ 
     b_{3}^{i,j}& i=0,\dots, k-3,\ j=0,1, \\
     b_{4}^{i,j}  & i,j=0, 1. \end{cases} }

For $k=4$, to be $C^1$ and to include $P_k$ space on each edge,  we need 
   at least $4(3+2)=20$ degrees of freedom.
While $\dim P_4=15$,  we select 5 Bell-bubble basis functions 
  $\{  b_{1}^{1,0}, b_{1}^{2,0},  b_{2}^{1,0}, b_{3}^{1,0}, b_{4}^{1,0} \}$ of $W_4$ in \eqref{W-T} from 
  \eqref{b-basis},   as shown in Figure \ref{T}.
Enriched by the 5 bubble functions,  we define the $C^1$-$P_4$ finite element by
\an{\label{V-4} V_4(T)=\t{span} \{ P_4(T), \ 
       b_{1}^{1,0}, b_{1}^{2,0},  b_{2}^{1,0}, b_{3}^{1,0}, b_{4}^{1,0} \}.  }
We define the following degrees of freedom for the space $V_4(T)$, ensuring the global 
   $C^1$ continuity, \  by $F_m(p)=$
\an{\label{d-4} &\begin{cases} p(\b x_i), \partial_x p(\b x_i), \partial_y p(\b x_i),
        \partial_{x y} p(\b x_i), \quad \ i=1,2,3,4, \\
    p(\frac { \b x_1+\b x_{2}}{2}),\; 
    p(\frac { \b x_2+\b x_{3}}{2}), \; 
     p(\frac { \b x_4+\b x_{3}}{2}), \; 
    p(\frac { \b x_1+\b x_{4}}{2}).
    \end{cases} }

\begin{lemma} The degrees of freedom \eqref{d-4} uniquely determine the 
   $V_4(T)$ functions in \eqref{V-4}.
\end{lemma}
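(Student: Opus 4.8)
Since $\dim V_4(T)=\dim P_4(T)+5=15+5=20$, which equals the number of linear functionals in \eqref{d-4} (namely $4\cdot 4=16$ vertex conditions plus $4$ edge-midpoint values), it suffices to prove that the only $p\in V_4(T)$ annihilated by all of \eqref{d-4} is $p\equiv 0$; uni-solvency then follows from a square-matrix dimension count. The plan is therefore to take such a $p = q + \sum c_{\alpha} b_\alpha$ with $q\in P_4(T)$ and the $b_\alpha$ the five chosen Bell bubbles, and show successively that all edge data of $p$ vanish, that $p$ is an interior bubble, and finally that the bubble coefficients vanish.

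First I would exploit the edge structure. On each edge $e\in\partial T$, the trace $p|_e$ is a univariate polynomial of degree at most $4$ (this holds for $P_4$ trivially, and for the Bell bubbles because $b\in W_4(T)\subset Q_4(T)$ has $p|_e\in Q_4(e)=P_4(e)$ along an axis-parallel edge). The five functionals associated with that edge — the two endpoint values $p(\b x_i),p(\b x_j)$, the two endpoint tangential derivatives $\partial_t p(\b x_i),\partial_t p(\b x_j)$, and the midpoint value $p(\tfrac{\b x_i+\b x_j}{2})$ — are precisely a unisolvent set for a quartic on a segment (Hermite data at the two ends gives a cubic; adding the midpoint value pins the quartic). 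Hence $p|_e\equiv 0$ on every edge. Similarly, the normal derivative $\partial_{\b n} p|_e$ lies in $Q_3(e)=P_3(e)$ (by definition of $W_4(T)$ for the bubbles, and because $\partial_{\b n}$ of a $P_4$ function restricted to an axis-parallel edge is a cubic), and the Hermite-type data $\partial_{\b n}p,\partial_{t}\partial_{\b n}p = \partial_{xy}p$ at the two endpoints of $e$ determine a cubic, so $\partial_{\b n}p|_e\equiv 0$ as well. Thus $p$ and $\nabla p$ vanish on all of $\partial T$.

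Next I would pin down the $P_4$ part. Writing $T=[0,h]^2$ after translation, $p$ vanishing to first order on all four edges forces its $P_4$ component $q$ to be divisible by $x(x-h)y(y-h)$, hence $q\equiv 0$ since that product already has degree $4$ and any further factor would push the degree past $4$; actually one gets $q = c\,x(x-h)y(y-h)$ for a scalar $c$. So $p = c\,x(x-h)y(y-h) + \sum_{\alpha} c_\alpha b_\alpha$, a $Q_4$-function vanishing to first order on $\partial T$, i.e. an element of the interior-bubble space $\{v\in Q_4(T): v = x(x-h)y(y-h)\,r,\ r\in Q_2(T)\}$ — but that space is only $(k-3)^2 = 1$-dimensional for $k=4$, spanned by $x(x-h)y(y-h)$ itself, which is exactly $b_1^{1,1}$ in the notation of \eqref{b-basis}.

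Finally, the main obstacle: none of the five enriching bubbles $b_1^{1,0}, b_1^{2,0}, b_2^{1,0}, b_3^{1,0}, b_4^{1,0}$ is the pure interior bubble $b_1^{1,1}$ — each of them, being a nodal basis function of the Bell element \eqref{b-basis}, has \emph{some} nonzero datum among the edge degrees of freedom \eqref{dof2} (for instance $b_2^{1,0}$ has unit $\partial_x$-value at an edge node, $b_4^{1,0}$ has unit $\partial_{xy}$-value at a vertex, $b_1^{i,0}$ has unit value at an edge Lagrange node, etc.). Since we have already shown all of $p$, $\nabla p$, and $\partial_{xy}p$ vanish on $\partial T$, every Bell edge-functional of \eqref{dof2} vanishes on $p$; applying the ones that detect each $b_\alpha$ forces the corresponding $c_\alpha=0$, one at a time. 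Then $p = c\,x(x-h)y(y-h)$, whose value at the center $\b x_1 + \tfrac h2\langle 1,1\rangle$ — which is among the $P_4$ interior Lagrange nodes of \eqref{dof2}, a functional we have shown vanishes on $p$ — gives $c(h/2)^4 = 0$, so $c=0$ and $p\equiv 0$. The only real care needed is to verify that the five chosen bubbles together with $x(x-h)y(y-h)$ are linearly independent and that the Bell functionals indeed separate them; both are immediate from the nodal-basis property in \eqref{b-basis}, so the argument closes.
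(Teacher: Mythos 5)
Your opening step is sound and in fact more structural than the paper's argument: on each axis-parallel edge the trace of any $v\in V_4(T)\subset Q_4(T)$ is a univariate quartic and its normal derivative a cubic (by the definition of $W_4(T)$ for the enrichment functions), and the functionals in \eqref{d-4} do force $p|_{\partial T}=0$, $\nabla p|_{\partial T}=0$, hence also $\partial_{xy}p=0$ along $\partial T$. But the middle of your argument does not hold up. You cannot conclude that the $P_4$ component $q$ is divisible by $x(x-h)y(y-h)$: only the sum $p=q+\sum_\alpha c_\alpha b_\alpha$ vanishes on the edges, and the five enrichment functions are \emph{boundary} Bell basis functions (e.g.\ $b_1^{1,0}$ equals $1$ at the bottom-edge midpoint), so $q|_e=-\sum_\alpha c_\alpha b_\alpha|_e$ need not vanish. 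Moreover, the space of $Q_4$ functions vanishing to first order on $\partial T$ is not $x(x-h)y(y-h)\,Q_2(T)$ (that is the zeroth-order vanishing space, of dimension $9$); it is the one-dimensional span of $x^2(x-h)^2y^2(y-h)^2$, which after normalization is the interior basis function $b_1^{1,1}$, whereas $x(x-h)y(y-h)$ has nonzero normal derivatives on $\partial T$ and nonzero $\partial_{xy}$ at $\b x_2$, so the subsequent extraction of the coefficients $c_\alpha$ is contaminated as written.

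The fatal gap is the last step. After the boundary argument you are entitled only to $p=c\,x^2(x-h)^2y^2(y-h)^2$, and the constant $c$ cannot be eliminated by any functional of \eqref{d-4}: the center value you invoke is a degree of freedom of the Bell element \eqref{dof2}, not of \eqref{d-4}, and nothing you proved shows it vanishes on $p$; indeed $x^2(x-h)^2y^2(y-h)^2$ annihilates all $20$ functionals of \eqref{d-4} while being nonzero at the center. At this stage the entire content of the lemma is that this interior bubble does not belong to $V_4(T)$, i.e.\ that no nonzero combination of $b_1^{1,1}$ and the five chosen $b_\alpha$ lies in $P_4(T)$, and your proposal never proves this. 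The paper's proof supplies exactly the missing piece by working on the $P_4$ part directly: using only functionals of \eqref{d-4} that annihilate the five enrichment functions, it factors $\lambda_{14}^2\lambda_{43}^2$ out of $p_4$ and kills the remaining constant with the midpoint of $\b x_2\b x_3$, so that $p_4=0$ is established before the bubble coefficients are touched. An argument of this kind, or an explicit proof that $b_1^{1,1}\notin P_4(T)+\operatorname{span}\{b_\alpha\}$, is needed to close your proof.
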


\begin{proof}We count the dimension of $V_4$ in \eqref{V-4} and the number $N_{\text{dof}}$ of
   degrees of freedom in \eqref{d-4},
\a{ \dim V_4(T) &= \dim P_4 + 5 =15+5=20, \\
    N_{\text{dof}} &= 4\cdot 4 +4 =20. }
Thus the uni-solvency is determined by uniqueness.

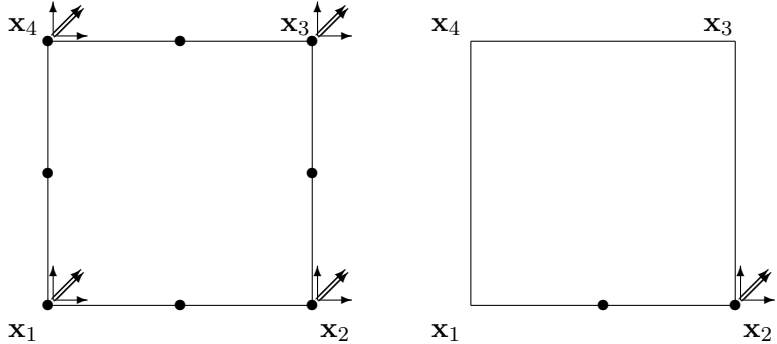
\begin{figure}[H]\centering
 \begin{picture}(280,140)(0,-10) 
 \def\c{\circle*{4}}\def\h{\vector(1,0){13}}\def\v{\vector(0,1){13}}
 \def\d{\multiput(0.5,-0.5)(-1,1){2}{\vector(1,1){11}}}  

 \put(0,0){\begin{picture}(100,108)(0,0)   
  \put(0,0){\line(1,0){100}}  \put(0,0){\line(0,1){100}}   
  \put(100,100){\line(-1,0){100}}  \put(100,100){\line(0,-1){100}}  
  \multiput(0,0)(50,0){3}{\c} \multiput(0,100)(50,0){3}{\c}  
   \multiput(0,50)(100,0){2}{\c}  
  \multiput(2,2)(100,0){2}{\v}  \multiput(2,102)(100,0){2}{\v}  
  \multiput(2,2)(0,100){2}{\h}  \multiput(102,2)(0,100){2}{\h} 
    \multiput(2,2)(0,100){2}{\d} \multiput(102,2)(0,100){2}{\d} 
  
      \put(-15,-12){$\b x_1$} \put(103,-12){$\b x_2$}
      \put(-15,104){$\b x_4$} \put(88,104){$\b x_3$}
 \end{picture} }
 
 \put(160,0){\begin{picture}(100,108)(0,0)   
  \put(0,0){\line(1,0){100}}  \put(0,0){\line(0,1){100}}   
  \put(100,100){\line(-1,0){100}}  \put(100,100){\line(0,-1){100}}  \put(102,2){\v} 
     \put(102,2){\h} \put(102,2){\d} 
   \put(50,0){\c}   \put(100,0){\c}   
      \put(-15,-12){$\b x_1$} \put(103,-12){$\b x_2$}
      \put(-15,104){$\b x_4$} \put(88,104){$\b x_3$}
 \end{picture} }

 \end{picture}
 \caption{The 20 degrees of freedom 
     for the  bubble-enriched $C^1$-$P_4$ element in \eqref{d-4},
     and the 5 bubble functions $\{  b_{1}^{1,0}, b_{1}^{2,0},  b_{2}^{1,0}, b_{3}^{1,0}, b_{4}^{1,0} \}$
     from \eqref{b-basis} used to define the bubble-enriched 
     $C^1$-$P_4$ finite element in \eqref{V-4}. } \label{T}
 \end{figure}

Let $p\in V_4(T)$ in \eqref{V-4} and $F_m(p)=0$ for all degrees of freedom in \eqref{d-4}.
Let \an{\label{p-4} p=p_4+\sum_{\ell=1}^5 c_\ell b_{\ell_1}^{\ell_2,\ell_3}
       \quad \ \t{for some } \ p_4\in P_4(T),  }
       where $b_{\ell_1}^{\ell_2,\ell_3}$ are defined in \eqref{V-4}.
As all $b_{\ell_1}^{\ell_2,\ell_3}$ vanish at these points, we have
\an{\label{p-4-1}\ad{
   && p_4(\b x_1)&=0, \ & \partial_y p_4(\b x_1)&=0, \ & p_4(\frac{\b x_1+\b x_4}2)&=0, & \\
    &&  p_4(\b x_4)& =0, \ & \partial_y  p_4(\b x_4)&=0,  } }
and consequently $p_4|_{\b x_1\b x_4}=0$ as the degree 4 polynomial has 5 zero points.
Thus \a{ p_4=\lambda_{14} p_3 \quad \ \t{for some } \ p_3\in P_3(T), }
where $\lambda_{14}$ is a linear polynomial vanishing at the line $\b x_1\b x_4$
  and assuming value $1$ at $\b x_2$.
Now, as all $b_{\ell_1}^{\ell_2,\ell_3}$ have these vanishing degrees of freedom,  we have
\a{ \partial_x p_4(\b x_1)&= h p_3(\b x_1)=0, \\
    \partial_{x y} p_4(\b x_1)&= h \partial_y p_3(\b x_1)=0, \\
    \partial_x p_4(\b x_4)&= h  p_3(\b x_4)=0,  \\
    \partial_{x y} p_4(\b x_4)&= h \partial_y p_3(\b x_4)=0,  } and consequently $p_3|_{\b x_1\b x_4}=0$.
    
We can then factor out another linear polynomial that
\an{\label{p-4-2} p_4= \lambda_{14}^2 p_2 \quad \ \t{for some } \ p_2\in P_2(T). }
As $b_{\ell_1}^{\ell_2,\ell_3}$ have these three degrees of freedom vanished,  
  we then have
  \a{ p_4(\frac{\b x_4+\b x_3}2)&= \frac 1{2^2} \cdot p_2(\frac{\b x_4+\b x_3}2)=0, \\
    p_4( \b x_3) &=1 \cdot p_2( \b x_3)=0, \\
    \partial_x p_4( \b x_3) &= \frac 1{h^2} \cdot p_2( \b x_3)+1 \cdot \partial_x  p_2( \b x_3)=0, }
and consequently $p_2|_{\b x_4\b x_3}=0$.
We factor out this linear polynomial factor as
\a{ p_4= \lambda_{14}^2 \lambda_{43} p_1  \quad \ \t{for some } \ p_1\in P_1(T), }
where $\lambda_{43}$ is a linear polynomial vanishing at the line $\b x_4\b x_3$
  and assuming value $1$ at $\b x_1$.

As $b_{\ell_1}^{\ell_2,\ell_3}$ again have the following two degrees of freedom vanished,  
  we then have
\a{ \partial_y p_1( \b x_4)&=1\cdot\frac{-1}h \cdot p_1( \b x_4 )=0, \\
    \partial_{x y} p_1( \b x_4)&= \partial_x p_2( \b x_3)=0,   }
and consequently $p_1|_{\b x_3\b x_4}=0$.
We factor out this last linear polynomial factor as
\a{ p_4= \lambda_{14}^2  \lambda_{43}^2 c \quad \ \t{for some } \ c\in P_0(T). } 
Evaluating the last degree of freedom, cf. Figure \ref{T},   we have
\a{  p_4(\frac{\b x_2+\b x_3}2)&=1\cdot \frac 1{2^2}\cdot c =0. }
Thus $c=0$ and $p_4=0$ in \eqref{p-4}.

As $p_4=0$, evaluating $p$ in \eqref{p-4} sequentially at the degrees of freedom of 
$b_{\ell_1}^{\ell_2,\ell_3}$, it follows that
  \a{ c_1=\dots=c_5 = 0.  }
The lemma is proved as $p=0$ in \eqref{p-4}.
\end{proof}

\section{The bubble-enriched $C^1$-$P_5$ finite element}

Enriched by the following seven bubble functions,  we define the bubble-enriched
    $C^1$-$P_5$ finite element by
\an{\label{V-5} V_5(T)=\t{span} \{ P_5(T), \ b_1^{1,0}, b_1^{2,0}, b_1^{3,0}, b_{3}^{1,0}, 
    b_{2}^{2,0}, b_{3}^{2,0}, b_{4}^{1,0}\},  }
where $b_\ell^{i_j}$ is a basis function in \eqref{b-basis},
    dual to the degrees of freedom in \eqref{dof2}.
We define the following degrees of freedom for the space $V_5(T)$, ensuring the global 
   $C^1$ continuity, \  by $F_m(p)=$
\an{\label{d-5} &\begin{cases} p(\b x_i), \partial_x p(\b x_i), \partial_y p(\b x_i),
        \partial_{x y} p(\b x_i),   \ & i=1,2,3,4, \\
    p(\frac { j\b x_1+(3-j)\b x_{2}}3),\; p(\frac { j\b x_2+(3-j)\b x_{3}}3, \;
     & j=1,\dots, k-3, \\ 
     p(\frac { j\b x_4+(3-j)\b x_{3}}3), \;p(\frac { j\b x_1+(3-j)\b x_{4}}3), \;
     & j=1,\dots, 2, \\ 
    \partial_{y} p(\frac { \b x_4+ \b x_{3}}2),\;
    \partial_{y} p(\frac { \b x_1+ \b x_{2}}2),\;  \\  
    \partial_{x} p(\frac { \b x_2+ \b x_{3}}2),
    \partial_{x} p(\frac { \b x_1+ \b x_{4}}2).
    \end{cases} } 

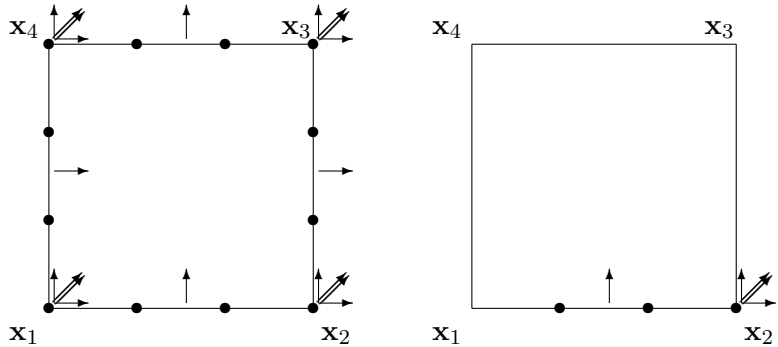
\begin{figure}[H]\centering
 \begin{picture}(280,140)(0,-10) 
 \def\c{\circle*{4}}\def\h{\vector(1,0){13}}\def\v{\vector(0,1){13}}
 \def\d{\multiput(0.5,-0.5)(-1,1){2}{\vector(1,1){11}}}  

 \put(0,0){\begin{picture}(100,108)(0,0)   
  \put(0,0){\line(1,0){100}}  \put(0,0){\line(0,1){100}}   
  \put(100,100){\line(-1,0){100}}  \put(100,100){\line(0,-1){100}}  
  \multiput(0,0)(33.33,0){4}{\c}  \multiput(0,100)(33.33,0){4}{\c} 
  \multiput(0,33.33)(100,0){2}{\c} \multiput(0,66.66)(100,0){2}{\c} 
  
  \multiput(2,2)(50,0){3}{\v}  \multiput(2,102)(50,0){3}{\v}  
  \multiput(2,2)(0,50){3}{\h}  \multiput(102,2)(0,50){3}{\h} 
    \multiput(2,2)(0,100){2}{\d} \multiput(102,2)(0,100){2}{\d} 
  
      \put(-15,-12){$\b x_1$} \put(103,-12){$\b x_2$}
      \put(-15,104){$\b x_4$} \put(88,104){$\b x_3$}
 \end{picture} }
 
 \put(160,0){\begin{picture}(100,108)(0,0)   
  \put(0,0){\line(1,0){100}}  \put(0,0){\line(0,1){100}}   
  \put(100,100){\line(-1,0){100}}  \put(100,100){\line(0,-1){100}}  \put(102,2){\v} 
     \put(102,2){\h} \put(102,2){\d} 
   \put(33.33,0){\c} \put(66.66,0){\c} \put(52,2){\v}   \put(100,0){\c}   
      \put(-15,-12){$\b x_1$} \put(103,-12){$\b x_2$}
      \put(-15,104){$\b x_4$} \put(88,104){$\b x_3$}
 \end{picture} }

 \end{picture}
 \caption{The $28$ degrees of freedom 
     for the enriched $C^1$-$P_5$ finite element in \eqref{V-5},
     and the 7 bubble functions $\{ b_1^{1,0}, b_1^{2,0}, b_1^{3,0}, b_{3}^{1,0}, 
    b_{2}^{2,0}, b_{3}^{2,0}, b_{4}^{1,0} \}$
      used to define \eqref{V-5}. } \label{T5}
 \end{figure}

\begin{lemma} The degrees of freedom \eqref{d-5} uniquely determine the 
   $V_5(T)$ functions in \eqref{V-5}.
\end{lemma}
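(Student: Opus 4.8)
The plan is to mirror the factorization argument used for the $C^1$-$P_4$ element in the previous lemma, but now with two more bubbles and eight more degrees of freedom. First I would count dimensions: $\dim V_5(T)=\dim P_5+7=21+7=28$, while the number of functionals in \eqref{d-5} is $4\cdot 4+(4+4)+(2+2)+4=28$, so uni-solvency reduces to proving uniqueness. I would then take $p\in V_5(T)$ with $F_m(p)=0$ for all $m$, write $p=p_5+\sum_{\ell=1}^7 c_\ell b_{\ell_1}^{\ell_2,\ell_3}$ with $p_5\in P_5(T)$, and first show $p_5=0$.

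For the $p_5=0$ step I would work edge by edge, exactly as in the $P_4$ case. All seven chosen Bell bubbles vanish (together with the relevant first and mixed derivatives) at $\b x_1$, $\b x_4$, and at the interior edge nodes on $\b x_1\b x_4$, so along the edge $\b x_1\b x_4$ the trace of $p_5$ has the value $p_5$ at $\b x_1,\b x_4$, the two interior nodes, and $\partial_y p_5$ (a tangential derivative, $h$ times the directional derivative) at $\b x_1$ and $\b x_4$ — that is six conditions on a degree-5 univariate polynomial, forcing $p_5|_{\b x_1\b x_4}=0$. Then $p_5=\lambda_{14}q_4$ with $q_4\in P_4(T)$, and the vanishing $x$- and $xy$-derivative dofs at $\b x_1,\b x_4$ peel off a second factor $\lambda_{14}$, giving $p_5=\lambda_{14}^2 q_3$ with $q_3\in P_3(T)$. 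One continues: the four edge conditions on $\b x_4\b x_3$ (values at $\b x_3,\b x_4$, two interior nodes) plus $\partial_y$-type data at the endpoints reduce $q_3$ on that edge, so $\lambda_{43}^2$ divides out, then the remaining point and derivative functionals on the last two edges eliminate the final linear factors, leaving $p_5=\lambda_{14}^2\lambda_{43}^2(\text{linear})$, which is degree-4 already, so in fact the last factor is a constant that is killed by evaluating one more midpoint dof. Hence $p_5=0$. Finally, with $p_5=0$, evaluating $p=\sum c_\ell b_{\ell_1}^{\ell_2,\ell_3}$ at the degrees of freedom dual to those bubbles in \eqref{dof2} gives $c_1=\dots=c_7=0$, so $p=0$.

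The bookkeeping obstacle — and the step I expect to need the most care — is making sure the eight \emph{new} functionals beyond the twelve corner dofs (the four interior edge-node values on the long edges, the two on the short edges, and the four tangential-derivative midpoint values $\partial_y p$ on $\b x_1\b x_2$, $\b x_4\b x_3$ and $\partial_x p$ on $\b x_1\b x_4$, $\b x_2\b x_3$) are exactly what is needed to supply the ``6 conditions on a degree-5 trace'' on each of the four edges, and that the selected bubble set $\{b_1^{1,0},b_1^{2,0},b_1^{3,0},b_3^{1,0},b_2^{2,0},b_3^{2,0},b_4^{1,0}\}$ genuinely vanishes at all the functionals invoked at each peeling stage (so that the argument really is about $p_5$ and not the full $p$). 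I would check the trace-degree count edge by edge: along an edge the restriction of a $P_5(T)$ polynomial is a univariate degree-5 polynomial with 6 coefficients, and $C^1$-matching there requires both the trace ($P_5$, hence the 4 Lagrange nodes plus endpoint tangential derivatives) and the normal derivative ($P_4$ on the edge, supplied by the corner $\partial_x$/$\partial_y$ and $\partial_{xy}$ data together with the midpoint normal-derivative functionals) to be determined; the Bell space $W_k(T)$ in \eqref{W-T} was designed precisely so the normal derivative lies in $Q_{k-1}(e)$, which is why the reduced dof set is consistent. Once the per-edge counts are verified, the factorization cascade and the concluding evaluation of the bubble dofs are routine.
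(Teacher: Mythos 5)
Your overall strategy is the paper's (dimension count, write $p=p_5+\sum c_\ell b_{\ell_1}^{\ell_2,\ell_3}$, kill $p_5$ by an edge-by-edge factorization, then kill the bubble coefficients), and your first step on the edge $\b x_1\b x_4$ (six Hermite conditions on the degree-5 trace) is right. But two of your peeling steps, as stated, do not go through. For the second factor of $\lambda_{14}$ you invoke only the vanishing $\partial_x$ and $\partial_{xy}$ dofs at $\b x_1,\b x_4$: after $p_5=\lambda_{14}q_4$ the quantity to be annihilated is $q_4|_{\b x_1\b x_4}$, a univariate polynomial of degree $4$ (five coefficients), and double zeros at the two endpoints are only four conditions — $y^2(y-h)^2$ survives them. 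You must also use the mid-edge normal-derivative dof $\partial_x p(\frac{\b x_1+\b x_4}2)$ from \eqref{d-5}; likewise the second $\lambda_{43}$ factor on the top edge needs $\partial_y p(\frac{\b x_4+\b x_3}2)$ together with $\partial_y,\partial_{xy}$ at $\b x_3$ (the data at $\b x_4$ gives nothing there, since $\lambda_{14}^2$ vanishes at $\b x_4$). You defer exactly these midpoint functionals to a ``to be checked'' remark at the end — and mislabel them as tangential derivatives, though $\partial_y$ on $\b x_1\b x_2$, $\b x_4\b x_3$ and $\partial_x$ on $\b x_1\b x_4$, $\b x_2\b x_3$ are the normal derivatives — but they are indispensable inside the cascade, which is precisely how the paper uses them.

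The conclusion is also garbled. From $p_5=\lambda_{14}^2\lambda_{43}^2 p_1$ the statement ``which is degree-4 already, so the last factor is a constant'' is a non sequitur: $p_1\in P_1(T)$ has three coefficients and the product has degree $5$, perfectly admissible in $P_5(T)$. What finishes the proof (as in the paper) is the two interior-node values on the edge $\b x_2\b x_3$, which force $p_1|_{\b x_2\b x_3}=0$, hence $p_1=c\,\lambda_{23}$, and then $\partial_x p(\frac{\b x_2+\b x_3}2)=0$ gives $c=0$. Moreover your plan to use ``the remaining point and derivative functionals on the last two edges'' cannot involve the bottom edge: the two interior values on $\b x_1\b x_2$, the midpoint $\partial_y$ there, and the four dofs at $\b x_2$ are exactly the seven functionals of \eqref{d-5} dual to the chosen bubbles in \eqref{b-basis}, so they carry no information about $p_5$ and must be reserved for the last step $c_1=\dots=c_7=0$; only the $21=\dim P_5$ remaining functionals (left, top and right edges) may be used against $p_5$, and the bubbles do vanish at all of them since every functional in \eqref{d-5} is a Bell dof of \eqref{dof2}. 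Finally, the bookkeeping should be repaired: there are $16$ corner dofs (not twelve), two interior value nodes on each of the four edges, and four midpoint normal derivatives, so the count is $16+8+4=28$, whereas your displayed sum $4\cdot4+(4+4)+(2+2)+4$ equals $32$. None of this breaks the strategy — the paper's proof is the corrected version of what you outline — but as written the two second-factor steps and the final step are genuine gaps.
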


\begin{proof} We count the dimension of $V_5$ in \eqref{V-5} and the number $N_{\text{dof}}$ of
   degrees of freedom in \eqref{d-5},
\a{ \dim V_5(T) &= \dim P_5 + 7 =21+7=28, \\
    N_{\text{dof}} &= 16+4\cdot 3=28. }
Thus the uni-solvency is determined by uniqueness.

Let $p\in V_5(T)$ in \eqref{V-5} and $F_m(p)=0$ for all degrees of freedom in \eqref{d-5}.
Let \an{\label{p-5} p=p_5+\sum_{ \ell=1}^{7} c_\ell b_{\ell_1}^{\ell_2,\ell_3}
     \quad \ \t{for some } \ p_5\in P_5(T).  }
Repeating \eqref{p-4-1} and \eqref{p-4-2},  we have 
\an{\label{p-5-1} p_5= \lambda_{14}^2 p_3 \quad \ \t{for some } \ p_3\in P_3(T). }

As $b_{\ell_1}^{\ell_2,\ell_3}$ have these four degrees of freedom vanished,  
  we then have 
\a{ p_5(\frac{ 2 \b x_4+\b x_3} 3 )&= \frac{2^2}{3^2} \cdot p_3(\frac{ 2 \b x_4+\b x_3} 3 )=0, \\
    p_5(\frac{  \b x_4+2\b x_3} 3 )&= \frac{1^2}{3^2} \cdot p_3(\frac{ \b x_4+2 \b x_3} 3 )=0, \\
    p_5( \b x_3 )&= 1 \cdot p_3( \b x_3  )=0, \\
  \partial_x p_5( \b x_3 )&= \frac{-2}{h} \cdot p_3( \b x_3  )+  \partial_x p_3( \b x_3  )=0, }
   and consequently $p_3|_{\b x_4\b x_3}=0$.

We factor out this linear polynomial factor as
\a{ p_5= \lambda_{14}^2 \lambda_{43} p_2  \quad \ \t{for some } \ p_2\in P_2(T). }
Evaluating the following three degrees of freedom, we have
\a{ \partial_y p_5(\frac{ \b x_4+\b x_3}2)&= \frac 1{2^2} \cdot 
                     \frac{1} h p_2(\frac{ \b x_4+\b x_3}3)=0,\\
        \partial_y p_5( \b x_3 )&=1 \cdot \frac{1} h p_2( \b x_3)=0,\\ 
    \partial_{x y} p_5( \b x_3 )&=\frac{-2}h \cdot+  \frac{-1} h  p_2(\b x_3) +
             1 \cdot \frac{-1} h \partial_{x}p_2(\b x_3)=0,
  } and consequently $p_2|_{\b x_4\b x_3}=0$.
We factor out this linear polynomial as
\an{\label{p-5-2} p_5= \lambda_{14}^2 \lambda_{43}^2 p_1 \quad \ \t{for some } \ p_1\in P_1(T). }

We evaluate the function values in the middle of edge $\b x_2\b x_3$, cf. Figure \ref{T5},
\a{ p_5(\frac{2\b x_2+\b x_3}3)&= 1^2 \cdot \frac{2^2}{3^2}\cdot p_1(\frac{2\b x_2+\b x_3}3)=0,\\
    p_5(\frac{\b x_2+2\b x_3}3)&= 1^2 \cdot \frac{1^2}{3^2}\cdot p_1(\frac{\b x_2+2\b x_3}3)=0. }
Thus $p_1$ vanishes on the edge and we have
\a{ p_5= \lambda_{14}^2 \lambda_{43}^2 \lambda_{23} p_0 \quad \ \t{for some } \ p_0\in P_0(T). }
Evaluating the last degree of freedom, cf. Figure \ref{T5},
\a{ \partial_x p_5(\frac{\b x_2 +\b x_3}2)=1 \cdot \frac 1 {3^2} \cdot \frac {1} h p_0 =0. }
Thus, $p_0=0$ and consequently $p_5=0$ in \eqref{p-5}.

Evaluating $p$ in \eqref{p-5} sequentially at the degrees of freedom of $b_{\ell_1}^{\ell_2,\ell_3}$,
  it follows that
  \a{ c_1=\dots=c_{7} = 0, \quad \t{and }\ p=0.  }
The lemma is proved.
\end{proof}

\section{The  bubble-enriched $C^1$-$P_k$ ($k\ge 6$) finite element}

For all $k\ge 6$, we enrich the $P_k$ space by following 8 bubbles to define the $C^1$-$P_k$  
  finite element, cf. Figure \ref{T-k},
\an{\label{V-k} V_k(T)=\t{span} \{ P_k(T), b_{1}^{1,0}, b_{1}^{2,0}, b_{3}^{1,0}, b_{3}^{2,0},
   b_{1}^{k-2,0},  b_{2}^{1,0},b_{3}^{k-3,0}, b_{4}^{1,0}  \},  }
where $b_\ell^{i,j}$ is a basis function in \eqref{b-basis} dual to a degree of freedom in
   \eqref{dof2}.
We define the following degrees of freedom for the space $V_k(T)$, which also ensure the global 
   $C^1$ continuity, \ cf. Figure \ref{T-k}, by $F_m(p)=$
\an{\label{d-k} &\begin{cases} p(\b x_i), \partial_x p(\b x_i), \partial_y p(\b x_i),
        \partial_{x y} p(\b x_i),   \ & i=1,2,3,4, \\
    p(\frac { j\b x_1+(k-2-j)\b x_{2}}{k-2}),\; p(\frac { j\b x_2+(k-2-j)\b x_{3}}{k-2}), \;
     & j=1,\dots, k-3, \\ 
    p(\frac { j\b x_4+(k-2-j)\b x_{3}}{k-2}), \;   p(\frac { j\b x_1+(k-2-j)\b x_{4}}{k-2}), \;
     & j=1,\dots, k-3, \\  
    \partial_{y} p(\frac { j\b x_4+(k-3-j)\b x_{3}}{k-3}),\;
    \partial_{y} p(\frac { j\b x_1+(k-3-j)\b x_{2}}{k-3}), & j=1,\dots, k-4, \\  
    \partial_{x} p(\frac { j\b x_2+(k-3-j)\b x_{3}}{k-3}),\;
    \partial_{x} p(\frac { j\b x_1+(k-3-j)\b x_{4}}{k-3}), & j=1,\dots, k-4,\\
     p(\frac {  i\b x_2+j\b x_{4}+(k-4-i-j)\b x_1 }{k-2}), 
            & i=1,\dots,k-7,\\ & j=1,\dots, i, \;k>7. 
    \end{cases} }

\begin{lemma} The degrees of freedom \eqref{d-k} uniquely determine the 
   $V_k(T)$ functions in \eqref{V-k}.
\end{lemma}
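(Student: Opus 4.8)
The plan is to mirror the $C^1$-$P_4$ and $C^1$-$P_5$ arguments. First I would record the dimension count: $\dim V_k(T)=\dim P_k(T)+8=\frac{(k+1)(k+2)}{2}+8$, whereas the number of functionals in \eqref{d-k} is $16+4(k-3)+4(k-4)+\binom{k-6}{2}=\frac{(k+1)(k+2)}{2}+8$ as well, so unisolvency reduces to injectivity. Accordingly, let $p\in V_k(T)$ with $F_m(p)=0$ for all $m$, and write $p=p_k+\sum_{\ell=1}^{8}c_\ell\,b_{\ell_1}^{\ell_2,\ell_3}$ with $p_k\in P_k(T)$.

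The key observation is that each of the eight bubbles in \eqref{V-k} is a Bell nodal basis function from \eqref{b-basis}, so it annihilates every functional of \eqref{dof2} except the one to which it is dual; and, after identifying the evaluation points, every functional of \eqref{d-k} coincides with a functional of \eqref{dof2}. The eight functionals of \eqref{d-k} dual to the chosen bubbles are precisely $p(\b x_2),\partial_x p(\b x_2),\partial_y p(\b x_2),\partial_{xy}p(\b x_2)$, together with, on the edge $\b x_1\b x_2$, the two interior value nodes and the two interior normal-derivative nodes closest to $\b x_1$. For every one of the remaining $N_{\text{dof}}-8=\dim P_k$ functionals all eight bubbles vanish, so $F_m(p)=0$ forces $F_m(p_k)=0$ there.

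Next I would show $p_k\equiv0$ by stripping off linear factors edge by edge, spending no reserved functional. On $\b x_1\b x_4$, the $k-1$ value nodes and the two tangential ($\partial_y$) values at $\b x_1,\b x_4$ are non-reserved and give Hermite data forcing $p_k|_{\b x_1\b x_4}=0$; the $k-2$ normal ($\partial_x$) values and the two $\partial_{xy}$ values at $\b x_1,\b x_4$ then force the normal derivative to vanish along that edge, so $p_k=\lambda_{14}^2 g_1$ with $g_1\in P_{k-2}$. Repeating the manoeuvre on $\b x_4\b x_3$, then $\b x_2\b x_3$, then $\b x_1\b x_2$ strips off $\lambda_{43}^2$, $\lambda_{23}^2$ and $\lambda_{12}^2$ in turn ($\lambda_{12}$ being the linear form vanishing on $\b x_1\b x_2$); the point is that each square factor already removed supplies a double zero at the vertex shared with the previous edge — exactly compensating for the vertex dofs already consumed, or reserved in the case of $\b x_2$ — so the interior value and normal-derivative nodes of each edge are just enough. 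This leaves $p_k=\lambda_{14}^2\lambda_{43}^2\lambda_{23}^2\lambda_{12}^2 h$ with $h\in P_{k-8}$, and the $\binom{k-6}{2}$ interior Lagrange nodes of \eqref{d-k} — under the substitution $i=p+q+1$, $j=p+1$ with $p+q+r=k-8$ — form a principal lattice of degree $k-8$ on a triangle, hence are $P_{k-8}$-unisolvent; since the degree-$8$ product is nonzero at each of them, $h\equiv0$ and $p_k\equiv0$. (For $k=6$ the bottom-edge value nodes already force the residual constant to vanish, and for $k=7$ one further step, the bottom-edge normal nodes, does so; no interior node is needed in these two cases.)

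With $p_k\equiv0$ we are left with $p=\sum_{\ell=1}^8 c_\ell\,b_{\ell_1}^{\ell_2,\ell_3}$, and applying the eight reserved functionals — each the Bell functional dual to exactly one of these bubbles — gives $c_1=\cdots=c_8=0$, whence $p\equiv0$. The step I expect to be most delicate is the edge-by-edge stripping: at each stage one must check that precisely enough non-reserved, unspent functionals remain to match the degree of the factor being removed, the apparent two-condition shortfall on each edge after the first being furnished by the double zero inherited across the shared vertex. The principal-lattice unisolvency of the interior nodes is routine once that reparametrization is noticed.
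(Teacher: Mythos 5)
Your proposal is correct and follows essentially the same route as the paper: the matching dimension count, the observation that every functional of \eqref{d-k} is a Bell functional of \eqref{dof2} so the eight bubbles vanish on all but their eight duals, the edge-by-edge stripping of $\lambda_{14}^2,\lambda_{43}^2,\lambda_{23}^2,\lambda_{12}^2$ in the same order, the $P_{k-8}$-unisolvency of the interior lattice, and finally the dual evaluation giving $c_1=\cdots=c_8=0$. Your write-up is in fact slightly more careful than the paper's on two points — the explicit principal-lattice reparametrization for the interior nodes and the consistent dof count for $k=6,7$ (the paper's listed values $40,48$ versus $\dim V_k=36,44$ are typos; the general formula agrees with yours) — but the argument is the same.
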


\begin{proof} We count the dimension of $V_k$ in \eqref{V-k} and the number $N_{\text{dof}}$ of
   degrees of freedom in \eqref{d-k},
\a{ \dim V_k(T) &= \dim P_k + 8 =\frac{(k+1)(k+2)}2+8  \\
                &=\begin{cases} 36, & k=6, \\
                    44, & k=7, \\
                    \frac 12 k^2+\frac 32 k + 9, \qquad & k\ge 8, \end{cases}\\
    N_{\text{dof}} &=16+4(2k-7)+\frac{(k-7)(k-6)}2 \\
                &=\begin{cases} 40, & k=6, \\
                    48, & k=7, \\
                    \frac 12 k^2+\frac 32 k + 9, \qquad & k\ge 8. \end{cases} }
Thus, the uni-solvency is determined by uniqueness.

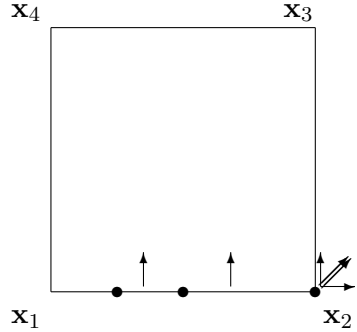
\begin{figure}[H]\centering
 \begin{picture}(140,140)(0,-10) 
 \def\c{\circle*{4}}\def\h{\vector(1,0){13}}\def\v{\vector(0,1){13}}
 \def\d{\multiput(0.5,-0.5)(-1,1){2}{\vector(1,1){11}}}

 \put(0,0){\begin{picture}(100,108)(0,0)   
  \put(0,0){\line(1,0){100}}  \put(0,0){\line(0,1){100}}   
  \put(100,100){\line(-1,0){100}}  \put(100,100){\line(0,-1){100}}  \put(102,2){\v} 
    \put(102,2){\h} \put(102,2){\d} 
    \put(100,0){\c}
  
    \put(35,2){\v}\put(68,2){\v}
    \put(25,0){\c} \put(50,0){\c} 
   
      \put(-15,-12){$\b x_1$} \put(103,-12){$\b x_2$}
      \put(-15,104){$\b x_4$} \put(88,104){$\b x_3$}
 \end{picture} }

 \end{picture}
 \caption{The 8 bubble functions $\{b_{1}^{1,0}, b_{1}^{2,0}, b_{3}^{1,0}, b_{3}^{2,0},
   b_{1}^{k-2,0},$ $  b_{2}^{1,0},b_{3}^{k-3,0}, b_{4}^{1,0} \}$
      used to define  the $C^1$-$P_k$ ($k\ge 6$) finite element in \eqref{V-k}. } \label{T-k}
 \end{figure}

Let $p\in V_k(T)$ in \eqref{V-k} and $F_m(p)=0$ for all degrees of freedom in \eqref{d-k}.
Let \an{\label{p-k} p=p_k+\sum_{ \ell=1}^{8} c_\ell b_{\ell_1}^{\ell_2,\ell_3}
   \quad \ \t{for some } \ p_k\in P_k(T).  }
Though we have one more dof and one more polynomial coefficient each step, 
  repeating \eqref{p-5-1} and \eqref{p-5-2},  we get 
\a{ p_k= \lambda_{14}^2\lambda_{43}^2 p_{k-4} \quad \ \t{for some } \ p_{k-4}\in P_{k-4}(T). }

As $b_{\ell_1}^{\ell_2,\ell_3}$ have the following degrees of freedom vanished,   we   have
\a{    &\quad \ p_{k}(\frac{j\b x_2+(k-2-j)\b x_3}{k-2})\\
        & = 1\cdot\frac{j^2}{(k-2)^2}
      p_{k-4}(\frac{j\b x_2+(k-2-j)\b x_3}{k-2})\\
        & =0, \; \qquad
   j=1,\dots,k-3,  }
and consequently $p_{k-4}|_{\b x_2\b x_3}=0$.
We factor out this linear polynomial factor as
\a{ p_{k}= \lambda_{14}^2 \lambda_{43}^2 \lambda_{23} p_{k-5}  
     \quad \ \t{for some } \ p_{k-5}\in P_{k-5}(T). } 
As $b_{\ell_1}^{\ell_2,\ell_3}$ have the following degrees of freedom vanished,   we   have
\a{    &\quad \ \partial_x p_{k}(\frac{j\b x_2+(k-3-j)\b x_3}{k-3})\\
        & = 1\cdot\frac{j^2}{(k-3)^2}\cdot\frac 1 h
      p_{k-5}(\frac{j\b x_2+(k-3-j)\b x_3}{k-3})\\
        & =0, \; \qquad
   j=1,\dots,k-4,  }
and consequently $p_{k-5}|_{\b x_2\b x_3}=0$.
  
Thus, factoring out the factor again, we have
\a{ p_k= \lambda_{14}^2 \lambda_{43}^2 \lambda_{23}^2 p_{k-6}
    \quad \ \t{for some } \ p_{k-6}\in P_{k-6}(T). }
Evaluating the function-value degrees of freedom on edge $\b x_1\b x_4$
    (one more than the $y$-derivative degrees of derivative), cf. Figure \ref{T-k}, we get
\a{  &\quad \  p_k(\frac{j\b x_1+(k-2-j)\b x_2}{k-2})\\
    &= 1^2\cdot  \frac {j^2}{(k-2)^2}\cdot \frac{(k-2-j)^2}{(k-2)^2} 
          \cdot p_{k-6}(\frac{j\b x_3+(k-2-j)\b x_2}{k-2})\\
   &=0,\qquad\qquad j=3,\dots,k-3,
  }and $p_{k-6}|_{\b x_1\b x_2}=0$. Thus,
\a{ p_k= \lambda_{14}^2 \lambda_{43}^2 \lambda_{23}^2 \lambda_{12} p_{k-7}
    \quad \ \t{for some } \ p_{k-7}\in P_{k-7}(T). } 
If $k=6$, we would have $p_k=0$ above.
Evaluating the $y$-derivative degrees of freedom on $\b x_1\b x_2$, cf. Figure \ref{T-k}, we get
\a{  &\quad \  \partial_y p_k(\frac{j\b x_1+(k-3-j)\b x_2}{k-3})\\
    &= \frac {j^2}{(k-3)^2} \cdot  \frac {(k-3-j)^2}{(k-3)^2}\cdot\frac 1 h\cdot
          \cdot p_{k-7}(\frac{j\b x_1+(k-3-j)\b x_2}{k-3})\\
   &=0,\qquad\qquad j=3,\dots,k-4,
  }and $p_{k-7}|_{\b x_1\b x_2}=0$. It leads to
\a{ p_k= \lambda_{14}^2 \lambda_{43}^2 \lambda_{23}^2 \lambda_{12}^2 p_{k-8}
    \quad \ \t{for some } \ p_{k-8}\in P_{k-8}(T). } 
Because the four factors are positive at the $\dim P_{k-8}$ internal Lagrange nodes in
  the last line of degrees of freedom \eqref{d-k}, and these $\dim P_{k-8}$ internal Lagrange nodes 
   are also the degrees of freedom of $b_{\ell_1}^{\ell_2,\ell_3}$ in \eqref{dof2},
   they force $p_{k-8}=0$ at these points and thus, $p_{k-8}$ itself is zero.

Evaluating $p$ in \eqref{p-k} sequentially at the degrees of freedom of $b_{\ell_1}^{\ell_2,\ell_3}$,
  it follows that
  \a{ c_1=\dots=c_{8} = 0, \quad \t{and }\ p=0.  }
The proof is complete.
\end{proof}

\section{The finite element solution and convergence }

The global bubble-enriched $C^1$-$P_k$ finite element space is defined by, for all $k\ge 4$, 
\an{\label{V-h} V_h=\{v_h\in H^2_0(\Omega) : v_h|_T \in V_k(T) \quad \forall T\in \mathcal Q_h \},
   } where $V_k(T)$ is defined in \eqref{V-4}, or \eqref{V-5}, or \eqref{V-k}.

 The finite element discretization of the biharmonic equation \eqref{bi} reads:
   Find $u\in V_h$ such that
\an{\label{finite} (\Delta u, \Delta v) = (f, v) \quad \forall v\in V_h, }
where $V_h$ is defined in \eqref{V-h}.
            
\begin{lemma}  The finite element problem \eqref{finite} has a unique solution.
\end{lemma}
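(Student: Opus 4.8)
The plan is to apply the Lax--Milgram lemma on the finite-dimensional space $V_h$; since $\dim V_h<\infty$ this will come down to checking that the homogeneous discrete problem has only the zero solution.

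First I would record that $V_h$ in \eqref{V-h} is a well-defined finite-dimensional subspace of $H^2_0(\Omega)$. By the three preceding lemmas, on each $T$ the degrees of freedom in \eqref{d-4}, \eqref{d-5}, or \eqref{d-k} are unisolvent for $V_k(T)$; moreover, for an edge $e$ of $T$, the degrees of freedom located on the closed edge $\bar e$ (the two endpoint nodes, together with the function values and the normal-derivative values along $e$) determine both the trace $v_h|_e$ and the normal derivative $\partial_{\b n}v_h|_e$ of $v_h|_T$. Hence matching the shared vertex and edge degrees of freedom across neighbouring elements produces a piecewise polynomial whose value and full gradient are continuous across interelement boundaries, i.e. a $C^1$ function, which therefore lies in $H^2(\Omega)$; requiring the degrees of freedom on $\partial\Omega$ to vanish puts it in $H^2_0(\Omega)$. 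Thus $V_h$ is finite-dimensional and \eqref{finite} is a square linear system in the coefficients with respect to any basis.

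Next, with $a(u,v)=(\Delta u,\Delta v)$ and $\ell(v)=(f,v)$, the form $a$ is bilinear, symmetric and bounded on $H^2_0(\Omega)$, and $\ell$ is a bounded linear functional since $f\in L^2(\Omega)$. The decisive point is that $a(v,v)=\|\Delta v\|_{L^2(\Omega)}^2>0$ whenever $0\ne v\in H^2_0(\Omega)$: if $\Delta v=0$ then, since $v\in H^1_0(\Omega)$, integration by parts gives $\|\nabla v\|_{L^2(\Omega)}^2=-(\Delta v,v)=0$, so $v$ is constant, and its vanishing boundary trace forces $v\equiv0$. Applying the Poincar\'e (Friedrichs) inequality twice upgrades this to $\|v\|_{H^2(\Omega)}\le C\|\Delta v\|_{L^2(\Omega)}$ for all $v\in H^2_0(\Omega)$, so $a$ is coercive on the subspace $V_h\subset H^2_0(\Omega)$.

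Finally, on the finite-dimensional space $V_h$ existence and uniqueness are equivalent: if $u_h\in V_h$ satisfies $a(u_h,v)=0$ for all $v\in V_h$, then $v=u_h$ gives $\|\Delta u_h\|_{L^2(\Omega)}^2=0$, whence $u_h\equiv0$ by the previous paragraph. Therefore the stiffness matrix of $a$ with respect to a basis of $V_h$ is symmetric positive definite, hence invertible, and \eqref{finite} has a unique solution $u_h$, which in addition obeys the stability estimate $\|u_h\|_{H^2(\Omega)}\le C\|f\|_{L^2(\Omega)}$. The only structural ingredient beyond classical Hilbert-space arguments is the inclusion $V_h\subset H^2_0(\Omega)$, i.e. that the chosen degrees of freedom genuinely enforce global $C^1$-continuity; but this is precisely what the element construction and the three unisolvency lemmas are designed to guarantee, so I expect no real obstacle here.
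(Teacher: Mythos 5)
Your proposal is correct and follows essentially the same route as the paper: both reduce the square finite-dimensional system to uniqueness of the homogeneous problem, obtain $\Delta u_h=0$ by testing with $u_h$, and then conclude $u_h=0$ from $u_h\in H^2_0(\Omega)$ via integration by parts. Your direct step $\|\nabla u_h\|_{L^2}^2=-(\Delta u_h,u_h)=0$ is in fact a cleaner rendering of the paper's slightly convoluted detour through an auxiliary dual problem, but the underlying argument is the same.
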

                        
\begin{proof}  As \eqref{finite} is a square system of finite linear equations,
  we only need to prove the uniqueness.
Let $f=0$ and $v_h=u_h$ in \eqref{finite}.
It follows $\Delta u_h = 0 $ on the domain.  Let $v\in H^2_0(\Omega)$ be the solution
  of \eqref{bi} with $f =\Delta u_h$, as $u_h\in H^2_0(\Omega)$.
Because $u_h\in C^1(\Omega)$,  we have
\a{ 0 &= \int_{\Omega} \Delta u_h v d\b x 
     = \int_{\Omega} -\nabla u_h \nabla v d\b x  
      =\int_{\Omega} (u_h)^2 d\b x.  }
Thus, $u_h=0$. The proof is complete.
\end{proof}

For convergence, the analysis is standard,  as we have $C^1$ conforming finite elements.

\begin{theorem}  Let $ u\in H^{k+1}\cap H^2_0(\Omega)$ be
    the exact solution of the biharmoic equation \eqref{bi}.  
   Let $u_h$ be the $C^1$-$P_k$ finite element solution of \eqref{finite}.   
   Assuming the full-regularity on \eqref{bi}, it holds 
  \a{  \| u- u_h\|_{0} + h^2  |  u- u_h |_{2}  
         & \le Ch^{k+1} | u|_{k+1}, \quad k\ge 6.  } 
\end{theorem}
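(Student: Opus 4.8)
The plan is to establish the error estimate by the standard Céa-type argument for conforming methods, followed by an interpolation estimate for the bubble-enriched $C^1$-$P_k$ element. Since $V_h \subset H^2_0(\Omega)$ by \eqref{V-h}, the bilinear form $a(u,v) = (\Delta u, \Delta v)$ is continuous and coercive on $H^2_0(\Omega)$ (coercivity being equivalent to the $H^2$-seminorm controlling the full $H^2_0$-norm, by the Poincaré–Friedrichs inequality), so Galerkin orthogonality gives $|u - u_h|_2 \le C \inf_{v_h \in V_h} |u - v_h|_2$. It then suffices to bound $|u - I_h u|_2$, where $I_h$ is the canonical interpolation operator associated with the degrees of freedom \eqref{d-k} (and \eqref{d-4}, \eqref{d-5} for $k=4,5$).

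First I would verify that $I_h$ is well-defined on $H^{k+1}(\Omega)$: the degrees of freedom involve point values and first/second mixed derivatives at nodes and along edges, all of which are bounded functionals on $H^{k+1}(T)$ for $T$ a two-dimensional element (by Sobolev embedding, since $k+1 \ge 4 > 2 + 2$ comfortably covers derivatives up to order two at points). Next, on the reference element $\hat T$, I would note that $I_{\hat T}$ reproduces $P_k(\hat T)$ exactly — because $P_k(\hat T) \subset V_k(\hat T)$ and the unisolvency lemmas (Lemmas for \eqref{d-4}, \eqref{d-5}, \eqref{d-k}) guarantee $I_{\hat T} p = p$ for all $p \in V_k(\hat T) \supset P_k(\hat T)$. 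A Bramble–Hilbert argument on $\hat T$ then yields $|\hat v - I_{\hat T}\hat v|_{2,\hat T} \le C |\hat v|_{k+1,\hat T}$ for all $\hat v \in H^{k+1}(\hat T)$. Finally, a scaling/affine-mapping argument transfers this to a general element $T$ of size $h$: using the chain rule for the seminorms under the affine map $F_T : \hat T \to T$ and the shape-regularity of the uniform square mesh, one obtains $|v - I_T v|_{2,T} \le C h^{k-1} |v|_{k+1,T}$ per element; summing over $T \in \mathcal{Q}_h$ gives $|u - I_h u|_2 \le C h^{k-1}|u|_{k+1}$, hence $h^2|u - u_h|_2 \le C h^{k+1}|u|_{k+1}$. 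The $L^2$-estimate $\|u - u_h\|_0 \le C h^{k+1}|u|_{k+1}$ follows by the Aubin–Nitsche duality trick, using the assumed full $H^4$-regularity of the biharmonic problem to control the dual solution.

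The main obstacle I anticipate is making the scaling argument fully rigorous, because the enriching $Q_k$-bubble functions $b_{\ell_1}^{\ell_2,\ell_3}$ are \emph{not} affine-equivalent across elements in the way a pure $P_k$ space would be — the space $V_k(T)$ is defined on each physical square directly via the Bell element $W_k(T)$, and one must check that the behavior of the interpolation operator (and in particular the constant $C$ in the Bramble–Hilbert bound) is genuinely uniform over the mesh. For a \emph{uniform} square mesh, however, all elements are translates and dilations of a single reference square, so $V_k(T)$ is the image of a fixed reference space $V_k(\hat T)$ under the homothety $F_T$; this makes the affine-equivalence bookkeeping routine, and the constant depends only on $\hat T$ and $k$. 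A secondary point worth a sentence is confirming that $I_h v \in H^2_0(\Omega)$, i.e. that the degrees of freedom \eqref{d-k} indeed enforce $C^1$-matching across interelement edges and the homogeneous boundary conditions — but this is exactly the "ensuring the global $C^1$ continuity" property asserted alongside each definition, so it can be cited rather than re-derived. The remaining computations — the chain-rule estimates relating $|\cdot|_{m,T}$ and $|\cdot|_{m,\hat T}$, and the duality argument — are entirely standard and I would not belabor them.
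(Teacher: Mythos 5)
Your proposal is correct and follows essentially the same route as the paper: Galerkin orthogonality plus the nodal interpolation operator $I_h$ (which preserves $P_k$ locally) for the $H^2$-seminorm bound $Ch^{k-1}|u|_{k+1}$, and the Aubin--Nitsche duality argument with $H^4$ regularity for the $L^2$ bound. The paper simply cites the standard interpolation theory (Girault--Scott, Scott--Zhang) where you spell out the Bramble--Hilbert and scaling details, including the uniform-mesh justification for the bubble-enriched space, so your extra care there is a harmless elaboration rather than a different method.
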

                        
\begin{proof} As $V_h\subset H^2_0(\Omega)$,  from \eqref{bi} and \eqref{finite},  we get
\a{ (\Delta ( u- u_h), \Delta v_h)=0\quad \forall v_h\in   V_{h }. }
Applying the Schwartz inequality,  we get 
\a{    |   u- u_h|_{2}^2  
     & = C (\Delta(  u-  u_h), \Delta(  u- u_h ))\\ &= C (\Delta(  u-  u_h), \Delta(  u- I_h u))\\ 
     &\le C |   u- u_h|_{2} |   u- I_h u |_{2} \\ 
     & \le Ch^{k-1} |u|_{k+1} |   u- u_h|_{2}  ,} 
      where $ I_h  u$ is the nodal interpolation defined by DOFs in \eqref{d-4} or  \eqref{d-5} or
         \eqref{d-k}.
As $V_k(T)\supset P_k(T)$,  we have $I_h u|_T = u|_T$ if $u\in P_k(T)$, 
  i.e., $I_h$ preserves $P_k$ functions locally.
  Such an interpolation operator is
   $H^2$ stable and consequently of the
    optimal order of convergence, by modifying the standard theory in \cite{Girault,Scott-Zhang}.

For the $L^2$ convergence,  we need an $H^4$ regularity for the dual problem: Find 
  $w\in H^2_0(\Omega)$ such that
  \an{ \label{d2}
    (\Delta w, \Delta v) &=(u-u_h, v), \ \forall v \in H^2_0(\Omega), }
  where \a{ |w|_4 \le C \|u-u_h\|_0 . }
Thus, by \eqref{d2}, \a{ \|u-u_h\|_0^2 &=(\Delta w, \Delta (u-u_h) ) = 
(\Delta (w-w_h), \Delta (u-u_h) ) \\
  & \le C h^2 |w|_{4}  h^{k-1} | u | _{k+1} 
  \\&  \le C h^{ k+1 }   | u | _{k+1} \|u-u_h\|_0. }
                 The proof is complete.
\end{proof}

\section{Numerical Experiments}

In the numerical computation,  we solve the biharmonic equation \eqref{bi}
   on the unit square domain $\Omega=(0,1)\times(0,1)$. 
We choose an $f$ in \eqref{bi} so that the exact solution is
\an{\label{s2}
   u =\sin^2(\pi x)\sin^2(\pi y).  }  
   
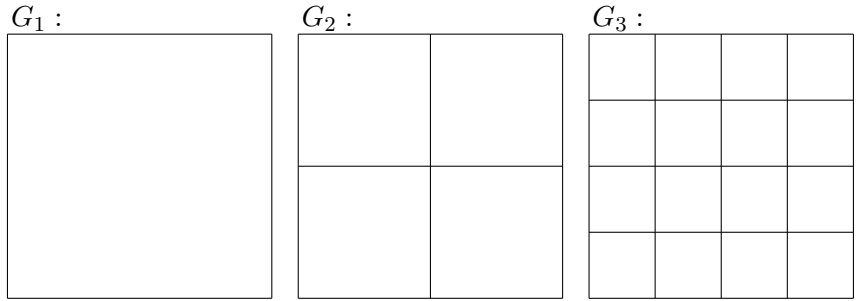
\begin{figure}[H]
\begin{center}\setlength\unitlength{1.0pt}\centering
\begin{picture}(330,115)(0,0) \put(5,101){$G_1:$}  \put(115,101){$G_2:$} \put(225,101){$G_3:$} 
\put(0,-2){ \begin{picture}(100,100)(0,0) \multiput(0,0)(100,0){2}{\line(0,1){100}}
       \multiput(0,0)(0,100){2}{\line(1,0){100}} \end{picture} }
\put(110,-2){ \begin{picture}(100,100)(0,0) \multiput(0,0)(50,0){3}{\line(0,1){100}}
       \multiput(0,0)(0,50){3}{\line(1,0){100}} \end{picture} }
\put(220,-2){ \begin{picture}(100,100)(0,0) \multiput(0,0)(25,0){5}{\line(0,1){100}}
       \multiput(0,0)(0,25){5}{\line(1,0){100}} \end{picture} }
\end{picture}\end{center}
\caption{The first three square grids for computing  \eqref{s2} in Tables \ref{t1}--\ref{t5}. }
\label{f-21}
\end{figure}

 We compute the solution \eqref{s2} on the square grids shown in Figure \ref{f-21}, by 
  the newly constructed $C^1$-$P_k$, $k=4,5,6,7,8$, finite elements \eqref{V-h}.
The results are listed in Tables \ref{t1}--\ref{t5}, where we can see that the optimal orders of convergence 
  are achieved in all cases.  
Additionally, we computed the corresponding $C^1$-$Q_k$ BFS finite element solutions in
  these tables.
The two solutions are about equally good.
The number of unknowns for the $C^1$-$P_4$ element is about 2/3 of that for the $C^1$-$Q_4$ element.
But the $C^1$-$P_k$ finite elements would have about
   $1/2$ of unknowns comparing to the $C^1$-$Q_k$ elements, eventually.
In the last row of some tables, the computer accuracy is reached, i.e., the round-off error is
  more than the  truncation error.

\begin{table}[H]
  \centering  \renewcommand{\arraystretch}{1.1}
  \caption{Error profile on the square meshes shown as in Figure \ref{f-21}, 
     for computing \eqref{s2}. }
  \label{t1}
\begin{tabular}{c|cc|cc|r}
\hline
grid & \multicolumn{2}{c|}{ $\| u-u_h\|_{0}$  \; $O(h^r)$}  
  &  \multicolumn{2}{c}{  $|u-u_h|_{2}$ \;$O(h^r)$} & $\dim V_h$  \\ \hline
    &  \multicolumn{5}{c}{ By the $C^1$-$Q_4$ BFS element. }   \\
\hline   
 1&    0.837E-01 &  0.0&    0.287E+01 &  0.0 &       25\\
 2&    0.939E-02 &  3.2&    0.161E+01 &  0.8 &       64\\
 3&    0.150E-03 &  6.0&    0.147E+00 &  3.5 &      196\\
 4&    0.461E-05 &  5.0&    0.184E-01 &  3.0 &      676\\
 5&    0.143E-06 &  5.0&    0.231E-02 &  3.0 &     2500\\
 6&    0.447E-08 &  5.0&    0.288E-03 &  3.0 &     9604\\
 7&    0.162E-09 &  4.8&    0.360E-04 &  3.0 &    37636\\
\hline 
    &  \multicolumn{5}{c}{ By the $C^1$-$P_4$ serendipity element \eqref{V-h}. }   \\
\hline   
 1&    0.375E+00 &  0.0&    0.174E+02 &  0.0 &       20\\
 2&    0.938E-02 &  5.3&    0.161E+01 &  3.4 &       48\\
 3&    0.128E-02 &  2.9&    0.533E+00 &  1.6 &      140\\
 4&    0.307E-04 &  5.4&    0.735E-01 &  2.9 &      468\\
 5&    0.871E-06 &  5.1&    0.992E-02 &  2.9 &     1700\\
 6&    0.264E-07 &  5.0&    0.127E-02 &  3.0 &     6468\\
 7&    0.828E-09 &  5.0&    0.159E-03 &  3.0 &    25220\\
\hline 
    \end{tabular}%
\end{table}%

\begin{table}[H]
  \centering  \renewcommand{\arraystretch}{1.1}
  \caption{Error profile on the square meshes shown as in Figure \ref{f-21}, 
     for computing \eqref{s2}. }
  \label{t2}
\begin{tabular}{c|cc|cc|r}
\hline
grid & \multicolumn{2}{c|}{ $\| u-u_h\|_{0}$  \; $O(h^r)$}  
  &  \multicolumn{2}{c}{  $|u-u_h|_{2}$ \;$O(h^r)$} & $\dim V_h$  \\
   \hline
    &  \multicolumn{5}{c}{ By the $C^1$-$Q_5$ BFS element. }   \\
\hline   
 1&    0.324E-01 &  0.0&    0.435E+01 &  0.0 &       36\\
 2&    0.138E-03 &  7.9&    0.918E-01 &  5.6 &      100\\
 3&    0.789E-05 &  4.1&    0.146E-01 &  2.7 &      324\\
 4&    0.130E-06 &  5.9&    0.912E-03 &  4.0 &     1156\\
 5&    0.206E-08 &  6.0&    0.570E-04 &  4.0 &     4356\\
 6&    0.302E-10 &  6.1&    0.356E-05 &  4.0 &    16900\\
\hline 
    &  \multicolumn{5}{c}{ By the $C^1$-$P_5$ serendipity element \eqref{V-h}. }   \\
\hline   
 1&    0.375E+00 &  0.0&    0.136E+02 &  0.0 &       28\\
 2&    0.486E-01 &  2.9&    0.550E+01 &  1.3 &       72\\
 3&    0.698E-03 &  6.1&    0.194E+00 &  4.8 &      220\\
 4&    0.109E-04 &  6.0&    0.988E-02 &  4.3 &      756\\
 5&    0.175E-06 &  6.0&    0.567E-03 &  4.1 &     2788\\
 6&    0.275E-08 &  6.0&    0.342E-04 &  4.0 &    10692\\ 
\hline 
    \end{tabular}%
\end{table}%

\begin{table}[H]
  \centering  \renewcommand{\arraystretch}{1.1}
  \caption{Error profile on the square meshes shown as in Figure \ref{f-21}, 
     for computing \eqref{s2}. }
  \label{t3}
\begin{tabular}{c|cc|cc|r}
\hline
grid & \multicolumn{2}{c|}{ $\| u-u_h\|_{0}$  \; $O(h^r)$}  
  &  \multicolumn{2}{c}{  $|u-u_h|_{2}$ \;$O(h^r)$} & $\dim V_h$  \\
   \hline
    &  \multicolumn{5}{c}{ By the $C^1$-$Q_6$ BFS element. }   \\
\hline   
 1&    0.157E-02 &  0.0&    0.802E+00 &  0.0 &       49\\
 2&    0.706E-04 &  4.5&    0.499E-01 &  4.0 &      144\\
 3&    0.394E-06 &  7.5&    0.115E-02 &  5.4 &      484\\
 4&    0.310E-08 &  7.0&    0.360E-04 &  5.0 &     1764\\
 5&    0.258E-10 &  6.9&    0.113E-05 &  5.0 &     6724\\
\hline 
    &  \multicolumn{5}{c}{ By the $C^1$-$P_6$ serendipity element \eqref{V-h}. }   \\
\hline   
 1&    0.375E+00 &  0.0&    0.137E+02 &  0.0 &       36\\
 2&    0.313E-02 &  6.9&    0.498E+00 &  4.8 &       96\\
 3&    0.408E-04 &  6.3&    0.245E-01 &  4.3 &      300\\
 4&    0.221E-06 &  7.5&    0.703E-03 &  5.1 &     1044\\
 5&    0.138E-08 &  7.3&    0.209E-04 &  5.1 &     3876\\ 
\hline 
    \end{tabular}%
\end{table}%

\begin{table}[H]
  \centering  \renewcommand{\arraystretch}{1.1}
  \caption{Error profile on the square meshes shown as in Figure \ref{f-21}, 
     for computing \eqref{s2}. }
  \label{t4}
\begin{tabular}{c|cc|cc|r}
\hline
grid & \multicolumn{2}{c|}{ $\| u-u_h\|_{0}$  \; $O(h^r)$}  
  &  \multicolumn{2}{c}{  $|u-u_h|_{2}$ \;$O(h^r)$} & $\dim V_h$  \\
   \hline
    &  \multicolumn{5}{c}{ By the $C^1$-$Q_7$ BFS element. }   \\
\hline   
 1&    0.115E-02 &  0.0&    0.379E+00 &  0.0 &       64\\
 2&    0.964E-06 & 10.2&    0.253E-02 &  7.2 &      196\\
 3&    0.183E-07 &  5.7&    0.763E-04 &  5.0 &      676\\
 4&    0.731E-10 &  8.0&    0.119E-05 &  6.0 &     2500\\ 
 5&    0.158E-10 &  2.2&    0.185E-07 &  6.0 &     9604\\
\hline 
    &  \multicolumn{5}{c}{ By the $C^1$-$P_7$ serendipity element \eqref{V-h}. }   \\
\hline   
 1&    0.375E+00 &  0.0&    0.140E+02 &  0.0 &       44\\
 2&    0.128E-02 &  8.2&    0.506E+00 &  4.8 &      120\\
 3&    0.679E-05 &  7.6&    0.409E-02 &  7.0 &      380\\
 4&    0.285E-07 &  7.9&    0.614E-04 &  6.1 &     1332\\
 5&    0.209E-09 &  7.1&    0.994E-06 &  6.0 &     4964\\
\hline 
    \end{tabular}%
\end{table}%

\begin{table}[H]
  \centering  \renewcommand{\arraystretch}{1.1}
  \caption{Error profile on the square meshes shown as in Figure \ref{f-21}, 
     for computing \eqref{s2}. }
  \label{t5}
\begin{tabular}{c|cc|cc|r}
\hline
grid & \multicolumn{2}{c|}{ $\| u-u_h\|_{0}$  \; $O(h^r)$}  
  &  \multicolumn{2}{c}{  $|u-u_h|_{2}$ \;$O(h^r)$} & $\dim V_h$  \\
   \hline
    &  \multicolumn{5}{c}{ By the $C^1$-$Q_8$ BFS element. }   \\
\hline   
 1&    0.531E-04 &  0.0&    0.716E-01 &  0.0 &       81\\
 2&    0.546E-06 &  6.6&    0.743E-03 &  6.6 &      256\\
 3&    0.755E-09 &  9.5&    0.433E-05 &  7.4 &      900\\
 4&    0.557E-11 &  7.1&    0.334E-07 &  7.0 &     3364\\
\hline 
    &  \multicolumn{5}{c}{ By the bubble-enriched $C^1$-$P_8$ element \eqref{V-h}. }   \\
\hline   
 1&    0.465E-01 &  0.0&    0.389E+01 &  0.0 &       53\\
 2&    0.782E-04 &  9.2&    0.246E-01 &  7.3 &      148\\
 3&    0.567E-06 &  7.1&    0.425E-03 &  5.9 &      476\\
 4&    0.109E-08 &  9.0&    0.350E-05 &  6.9 &     1684\\
\hline 
    \end{tabular}%
\end{table}%


\begin{thebibliography}{99}


 \bibitem{Argyris}   J. H. Argyris, I. Fried and D. W. Scharpf,
     The TUBA family of plate elements for the matrix
     displacement method,  The Aeronautical Journal of the
    Royal Aeronautical Society 72 (1968),  514--517.

%
% \bibitem{Brenner} S. C. Brenner and L.-Y. Sung, 
% $C^0$ interior penalty methods for fourth order elliptic boundary value problems on polygonal domains,
%    J. Sci. Comput. 22/23 (2005), 83--118.

\bibitem{Bell}   K. Bell,
    A refined triangular plate bending element,
      Internal. J. Numer. methods Engrg, 1 (1969),  101--122.


\bibitem{Bogner}  F. K. Bogner, R. L. Fox and L. A. Schmit, The generation of interelement compatible
stiffness and mass matrices by the use of interpolation formulas, Proceedings of the Conference
on Matrix Methods in Structural Mechanics, Wright Patterson A.F.B. Ohio, 1965.
 
%
%\bibitem{Bramble} James H. Bramble and M. Zlamal,  
%Triangular elements in the finite element method,
%Math. Comp. 24 (1970), 809--820.
%
%
%\bibitem{Chen-Chen}  H. R. Chen and S. C. Chen, 
%   C0-nonconforming elements for a fourth-order elliptic problem,
%   (Chinese) Math. Numer. Sin. 35 (2013), no. 1, 21--30. 
%

\bibitem{Ciarlet} P. G. Ciarlet, 
The finite element method for elliptic problems,
 Studies in Mathematics and its Applications, Vol. 4. North-Holland Publishing Co., Amsterdam-New York-Oxford, 1978.
%
% 
%\bibitem{Ciavaldini} J. F. Ciavaldini and J. C. Nedelec, 
%Sur l'\'el\'ement de Fraeijs de Veubeke et Sander,
%Rev. Frainaise Automat. Informat. Recherche Op\'erationnelle S\'er. Rouge Anal.
%Mum\'er. R-2 (1974), 29--45.


\bibitem{Clough}
 R.W. Clough and J.L. Tocher,
Finite element stiffness matrices for analysis of
    plates in bending, in:
  Proceedings of the Conference on Matrix Methods in Structural
    Mechanics, Wright Patterson A.F.B. Ohio, 1965.

 
%
%
%\bibitem{CuiZ} M. Cui and S. Zhang,  On the Uniform Convergence of the Weak Galerkin Finite Element Method for a Singularly-Perturbed Biharmonic Equation,
%   J. Sci. Comput. 82 (2020), no. 1, Art. 5, 15 pp.

\bibitem{Douglas}  J. Douglas Jr., T. Dupont, P. Percell and R. Scott,
 A family of $C^1$ finite elements with optimal approximation
    properties for various Galerkin methods for 2nd and 4th order problems,
   RAIRO Anal. Numer. 13 (1979), no. 3, pp.~227--255.





\bibitem{Fraeijs}  B. Fraeijs de Veubeke,
   A conforming finite element for plate bending, in:
    O.C. Zienkiewicz and G.S. Holister (Eds.),
   Stress Analysis,  Wiley, New York, 1965,
     145--197.

\bibitem{Fraeijs68}  B. Fraeijs de Veubeke,  
  A conforming finite element for plate bending, Internat. J. Solids and Structures 4 (1968), 95--108.
%
%\bibitem{Gao-Zhang-Wang}
%   B. Gao, S. Zhang and M. Wang,
%  A note on the nonconforming finite elements for elliptic problems, 
%  J. Comput. Math. 29 (2011), no. 2, 215--226. 

\bibitem{Girault} V. Girault and L. R. Scott, 
  Hermite interpolation of nonsmooth functions preserving boundary conditions,
     Math. Comp. 71 (2002), no. 239, 1043--1074.

\bibitem{Hu-Hongling} H. Hu and S. Zhang,
Rectangular $C^1$-$Q_k$  Bell finite elements in two and three dimensions,
  2025, arXiv:2506.23702. 

%
% \bibitem{Han} H. Han, Z. Huang and S. Zhang,
% An Iterative Method based on equation decomposition for the fourth-order singular perturbation problem, Numerical Methods for Partial Differential Equations, 29 (2013), no. 3, 961--978.
%
% \bibitem{Hu-Huang-Zhang} J. Hu, Y. Huang and S. Zhang,
%   The lowest order
%    differentiable finite element on rectangular grids,
%     SIAM Num. Anal., 49 (2011), No 4, 1350--1368.
%
%
%\bibitem{Hulow} J. Hu and Z.-c. Shi, A lower bound of the L2 norm error estimate for the
%Adini element of the biharmonic equation, SIAM J. Numer. Anal., 51 (2013),
%   2651-2659.
% 
%\bibitem{HuTian}  J. Hu, S. Tian and S. Zhang,  A family of 3D
% H2-nonconforming tetrahedral finite elements for the biharmonic equation,
% Sci. China Math. 63 (2020), no. 8, 1505--1522.
%
% \bibitem{Hu-Zhang-Hk} J. Hu and S. Zhang, The minimal conforming $H^k$ finite element spaces on
%   $R^n$ rectangular grids,  Math. Comp.  84  (2015),  no. 292, 563--579.
%
%\bibitem{HuZH3} J. Hu and S. Zhang,  A cubic H3-nonconforming finite element,
%  Commun. Appl. Math. Comput. 1 (2019), no. 1, 81--100.
%\bibitem{HuZ2} J. Hu and S. Zhang,   An error analysis method SPP-BEAM and a construction guideline of nonconforming finite elements for fourth order elliptic problems,
% J. Comput. Math. 38 (2020), no. 1, 195--222.
%
%\bibitem{HuangM}  J. Huang, X. Huang and S. Zhang,  A superconvergence of the Morley element via postprocessing,
%  Recent advances in scientific computing and applications, 189-196,
%  Contemp. Math., 586, Amer. Math. Soc., Providence, RI, 2013.
%
%
%\bibitem{Lag} M. Laghchim-Lahlou and P. Sablonni\`ere, 
%Composite quadrilateral ﬁnite elements of class $C^r$, 
% in Mathematical Methods in Computer Aided Geometric Design, 
% T. Lyche and L. Schumaker, eds., Academic Press, New York, 1989, 313--418.
% 
%\bibitem{Lag2} M. Laghchim-Lahlou and P. Sablonni\`ere, 
% Quadrilateral ﬁnite elements of FVS type andclass $C^\rho$,
%    Numer. Math., 70 (1995), pp. 229--243.
%    
%
%\bibitem{Lai} M.-J. Lai and L. L. Schumaker,  
%Quadrilateral macroelements, 
%SIAM J. Math. Anal. 33 (2002), no. 5, 1107--1116.
%%
%
%\bibitem{Lai} M.-J. Lai and J. Lanterman, 
% Construction of $C^1$ polygonal splines over quadrilateral partitions,
%  Comput. Aided Geom. Design 92 (2022), Paper No. 102063, 42 pp.
%
%
%
%\bibitem{Li} H. Li, W. Shan and Z. Zhang, 
%$C^1$-conforming quadrilateral spectral element method for fourth-order equations,
% Commun. Appl. Math. Comput. 1 (2019), no. 3, 403--434.  
%%
%
%\bibitem{Lascaux}
%P. Lascaux and P. Lesaint, Some Nonconforming Finite Elements for the Plate Bending Problem, RAIRO Anal. Numer.,  1 (1975),  9--53.

%
%\bibitem{Moitinho} J. P. Moitinho de Almeida,  
%Two families of two-dimensional finite element interpolation functions of general degree with general continuity,
%Internat. J. Numer. Methods Engrg. 124 (2023), no. 10, 2171--2195.


\bibitem{Morgan-Scott} J. Morgan and R. Scott,
A nodal basis for $C^1$ piecewise polynomials of degree $n\ge 5$,
    Math Comp 29 (1975), 736--740.
%
%
%\bibitem{morley}  L. Morley, The triangular equilibrium element in the
%solution of plate bending problems, Aero. Quart., 19 (1968),
% 149--169.
 
%
%\bibitem{MuW} L. Mu, J. Wang, X. Ye and S. Zhang,  A C0-weak Galerkin finite element method for the biharmonic equation,
% J. Sci. Comput. 59 (2014), no. 2, 473--495.
%
%\bibitem{MuY} L. Mu, X. Ye and S. Zhang,  Development of a P2 element with optimal L2 convergence for biharmonic equation,
% Numer. Methods Partial Differential Equations, 35 (2019), no. 4, 1497--1508.
%
%
%
%\bibitem{Percell}   P. Percell,       On cubic and quartic Clough-Tocher finite elements,
%        SIAM J. Numer. Anal. 13 (1976), 100--103.
%
% \bibitem{Powell}  M.J.D. Powell and M.A. Sabin,    Piecewise quadratic approximations on triangles,
%    ACM Transactions on Mathematical   Software,  3-4 (1977),  316--25.


 \bibitem{Sander} G. Sander, Bornes sup\'erieures et
   inf\'erieures dans l'analyse matricielle des plaques en flexiontorsion,
Bull. Sco. Roy. Sci. Li\`ege., 33 (1964), 456--494.
 
\bibitem{Scott-Zhang} L. R. Scott and S. Zhang,
Finite element interpolation of nonsmooth functions satisfying boundary conditions,
 Math. Comp. 54 (1990), no. 190, 483--493. 
 
\bibitem{Xu-Zhang7}  X. Xu and S. Zhang,  
A $C^1$-$P_7$ Bell finite element on a triangle,
    Comput. Methods Appl. Math. 24 (2024), no. 4, 995--1000. 
 
\bibitem{Xu-Zhang}  X. Xu and S. Zhang, 
Three families of $C^1$-$P_{2m+1}$ Bell finite elements on triangular meshes,
 Numer. Algorithms 99 (2025), no. 2, 717--734.
%
%\bibitem{YeZ} X. Ye and S. Zhang, A Stabilizer Free Weak Galerkin Method for the Biharmonic Equation on Polytopal Meshes,
% SIAM J. Numer. Anal. 58 (2020), no. 5, 2572--2588.
%
%
%\bibitem{YeZZ} X. Ye, S. Zhang and Z Zhang,  A new P1 weak Galerkin method for the biharmonic equation, 
%  J. Comput. Appl. Math. 364 (2020), 12337, 10 pp.
%
%\bibitem{Ye-Z-Hdiv} X. Ye and S. Zhang, 
%A family of H-div-div mixed triangular finite elements for the biharmonic equation,
% Results Appl. Math. 15 (2022), Paper No. 100318, 16 pp. 

\bibitem{Zen70} A. \v Zeni\v sek, Interpolation polynomials on the triangle, Numer. Math. 15
(1970), 283--296.

\bibitem{Zenisek} A. \v Zeni\v sek,  Alexander Polynomial approximation on tetrahedrons in the
    finite element method,
   J. Approximation Theory  7  (1973),  334--351.

%
%\bibitem{ZenisekC2} A. \v Zeni\v sek, A general theorem on triangular finite $C^{(m)}$-elements,
%  Rev. Francaise Automat. Informat. Recherche Operationnelle Sér. Rouge 8 (1974), no. R-2, 119--127.
%
%\bibitem{ZhangMG}  S. Zhang, An optimal order multigrid method for biharmonic,
%    $C^1$ finite-element equations, Numer. Math. 56 (1989), 613--624.
%

\bibitem{Zhang-C1Q}  S. Zhang, 
  On the full $C_1$-$Q_k$ finite element spaces on rectangles and cuboids, Adv. Appl. Math. Mech., 2 (2010), 701--721.
%
%\bibitem{Zhang}  S. Zhang, A C1-P2 finite element without nodal basis,
%     M2AN 42 (2008), 175--192.

\bibitem{Z3d} S. Zhang,   A family of 3D continuously differentiable
   finite elements
           on tetrahedral grids,  Applied Numerical Mathematics,
          59 (2009), no. 1,  219--233.

\bibitem{Z4d} S. Zhang,   A family of differentiable finite elements on simplicial grids in four space dimensions, Math. Numer. Sin. 38 (2016), no. 3, 309--324.

% 
%\bibitem{Zhang-jump} S. Zhang,   Coefficient jump-independent approximation    % 2016f
%	 of the conforming and nonconforming finite element solutions,
%      Adv. Appl. Math. Mech., 8 (2016) No. 5,  722--736.
%
%
%
%\bibitem{Zhang-bubble-p4} S. Zhang, 
%A family of $P_4$ macro-bubble enriched $C^1$-$P_k$ finite elements on triangular meshes, 
%   Numer. Math. 157 (2025), no. 3, 1133--1159. 
% 

\bibitem{Zhang-F} S. Zhang, 
 	Two families of C1-Pk Fraeijs de Veubeke-Sander finite elements on quadrilateral meshes, 
 arXiv: 2505.13968.
 
\bibitem{Zhang-seren} S. Zhang, 
 	$C^1$-$Q_k$ serendipity finite elements on rectangular meshes,
 arXiv:submit/7076333.
 
\bibitem{Zlamal}  M. Zlamal,
 On the ﬁnite element method, Numer. Math. 12 (1968), 394--409.

\end{thebibliography}
\end{document}